\DeclareMathAlphabet{\mathpzc}{OT1}{pzc}{m}{it}
\newtheorem{theorem}{Theorem}[section]
\newtheorem{claim}[theorem]{Claim}
\newtheorem{assertion}[theorem]{Assertion}
\newtheorem{corollary}[theorem]{Corollary}
\newtheorem{proposition}[theorem]{Proposition}
\theoremstyle{definition}
\newtheorem{definition}[theorem]{Definition}
\newtheorem{example}[theorem]{Example}
\theoremstyle{remark}
\newtheorem{remark}[theorem]{Remark}
\newtheorem{notation}[theorem]{Notation}
\numberwithin{equation}{section}
\begin{document}

\title{Non-compact Newton boundary and Whitney equisingularity for non-isolated singularities} 

\author{Christophe Eyral and Mutsuo Oka}
\address{C. Eyral, Institute of Mathematics, Polish Academy of Sciences, \'Sniadeckich~8, 00-656 Warsaw, Poland}
\email{eyralchr@yahoo.com}
\address{M. Oka, Department of Mathematics, Tokyo University of Science, 1-3 Kagurazaka, Shinjuku-ku, Tokyo 162-8601, Japan}   
\email{oka@rs.kagu.tus.ac.jp}

\subjclass[2010]{14J70, 14J17, 32S15, 32S25.}

\keywords{Non-isolated hypersurface singularities; non-compact Newton boundary; uniform local tameness; Whitney equisingularity}

\begin{abstract}
In an unpublished lecture note, J.~Brian\c con observed that if $\{f_t\}$ is a family of isolated complex hypersurface singularities such that the Newton boundary of $f_t$ is independent of $t$ and $f_t$ is non-degenerate, then the corresponding family of hypersurfaces $\{f_t^{-1}(0)\}$ is Whitney equisingular (and hence topologically equisingular). A first generalization of this assertion to families with non-isolated singularities was given by the second author under a rather technical condition. In the present paper, we give a new generalization under a simpler condition.
\end{abstract}

\maketitle

\markboth{C. Eyral and M. Oka}{Non-compact Newton boundary and Whitney equisingularity for non-isolated singularities} 

\section{Introduction}\label{intro}

Let $(t,\mathbf{z}):=(t,z_1,\ldots, z_n)$ be coordinates for $\mathbb{C}\times \mathbb{C}^n$, let $U$ be an open neighbourhood of $\mathbf{0}\in\mathbb{C}^n$ and $D$ be an open disc centered at $0\in\mathbb{C}$; finally, let 
\begin{equation*}
f\colon (D\times U, D\times \{\mathbf{0}\}) \rightarrow (\mathbb{C},0),\ (t,\mathbf{z}) \mapsto f(t,\mathbf{z}),
\end{equation*} 
be a polynomial function. As usual, we write $f_t(\mathbf{z}):=f(t,\mathbf{z})$ and we denote by $V(f_t)$ the hypersurface in $U\subseteq\mathbb{C}^n$ defined by $f_t$. We are interested in the local structure of the singular loci of the hypersurfaces $V(f_t)$ at the origin $\mathbf{0}\in\mathbb{C}^n$ as the parameter $t$ varies from a ``small'' non-zero value $t_0\not=0$ to $t=0$. More precisely, we are looking for easy-to-check conditions on the members $f_t$ of the family $\{f_t\}$ that guarantee equisingularity (in a sense to be specified) for the corresponding family of hypersurfaces $\{V(f_t)\}$. 

In an unpublished lecture note \cite{B}, J.~Brian\c con made the following observation.

\begin{assertion}[Brian\c con]\label{thm-briancon}
Suppose that for all $t$ sufficiently small, the following three conditions are satisfied:
\begin{enumerate}
\item
$f_t$ has an isolated singularity at the origin $\mathbf{0}\in\mathbb{C}^n$;
\item
the Newton boundary $\Gamma(f_t;\mathbf{z})$ of $f_t$ at $\mathbf{0}$ with respect to the coordinates $\mathbf{z}$ is independent of $t$;
\item
$f_t$ is non-degenerate (in the sense of the Newton boundary as in \cite{K,O2}). 
\end{enumerate}
Then the family of hypersurfaces $\{V(f_t)\}$ is Whitney equisingular.
\end{assertion}

We say that a family $\{V(f_t)\}$ of (possibly non-isolated) hypersurface singularities is \emph{Whitney equisingular} if there exist a Whitney stratification of the hypersurface $V(f):=f^{-1}(0)$ in an open neighbourhood $\mathscr{U}$ of the origin $(0,\mathbf{0})\in\mathbb{C}\times\mathbb{C}^n$ such that the $t$-axis $\mathscr{U}\cap(D\times\{\mathbf{0}\})$ is a stratum.
By ``Whitney stratification'' we mean a Whitney stratification in the sense of \cite{GWPL}---that is, we do not require that the frontier condition holds. However, note that if $\mathscr{S}$ is  Whitney stratification of $\mathscr{U}\cap V(f)$ with the $t$-axis as a stratum, then so is the partition $\mathscr{S}^c$ consisting of the connected components of the strata of $\mathscr{S}$;  moreover, $\mathscr{S}^c$ satisfies the frontier condition (see \cite{GWPL} for details).
Whitney equisingularity is quite a strong form of equisingularity. 
Combined with the Thom-Mather first isotopy theorem (cf.~\cite{T,M,GWPL}), it implies topological equisingularity. Here, we say that the family $\{V(f_t)\}$ is \emph{topologically equisingular} if  for all sufficiently small $t$, there is an open neighbourhood $U_t\subseteq U$ of $\mathbf{0}\in\mathbb{C}^n$ together with a homeomorphism $\varphi_t\colon (U_t,\mathbf{0})\rightarrow (\varphi_t(U_t),\mathbf{0})$ such that $\varphi_t(V(f_0)\cap U_t)=V(f_t)\cap \varphi_t(U_t)$.
Note that a family of \emph{isolated} hypersurface singularities (as in Assertion \ref{thm-briancon}) is Whitney equisingular if and only if $V(f)\setminus (D\times\{\mathbf{0}\})$ is smooth and Whitney $(b)$-regular over $D\times\{\mathbf{0}\}$ in an open neighbourhood of the origin in $\mathbb{C}\times\mathbb{C}^n$. Here, it is worth to observe that, in general, even if the smooth part of $V(f)$ is Whitney $(b)$-regular along the $t$-axis, the family of hypersurfaces $\{V(f_t)\}$ may  fail to be topologically equisingular. The simplest example illustrating this phenomenon is due to O. Zariski \cite{Z} and is as follows.  
Consider the family defined by the polynomial function $f(t,z_1,z_2):=t^2z_1^2-z_2^2$. The singular locus of $V(f)$ consists of two lines, namely the $t$-axis and the $z_1$-axis. Clearly, the smooth part of $V(f)$ is Whitney $(b)$-regular along the $t$-axis. However, there is no local ambient homeomorphism sending $(V(f_0),\mathbf{0})$ onto $(V(f_t),\mathbf{0})$. 

To conclude with Assertion \ref{thm-briancon}, let us mention that its proof is based on a famous theorem due to A.~G. Kouchnirenko \cite{K} and the second author \cite{O2}. This~theorem says that if $h(\mathbf{z})$ is a non-degenerate polynomial function with an isolated singularity at~$\mathbf{0}$, then its Milnor fibration and its Milnor number at~$\mathbf{0}$, as well as the local ambient topological type of the corresponding hypersurface $V(h)$ at~$\mathbf{0}$, are determined by the Newton boundary $\Gamma(h;\mathbf{z})$ of $h$ at~$\mathbf{0}$ with respect to the coordinates~$\mathbf{z}$. 

In \cite{O6}, the second author gave a generalization of Brian\c con's assertion to families of \emph{non-isolated} singularities under a relatively technical condition (so-called ``simultaneous IND-condition''). Essentially, the technical nature of this condition comes the fact that, in \cite{O6}, the question of Whitney equisingularity is treated in the general framework of complete intersection varieties. In the present paper, we restrict ourselves to the case of hypersurfaces, and we give a new generalization under a rather simple condition. As in \cite{O6}, when dealing with non-isolated singularities, we need to consider not only the compact faces of the Newton polygon (i.e., the faces involved in the usual Newton boundary) but also ``essential'' non-compact faces. The union of the compact and essential non-compact faces forms the \emph{non-compact Newton boundary}, which was considered not only in \cite{O6} but also in \cite{O1} to study the Milnor fibration and some geometric properties such as Thom's condition or the transversality of the nearby fibres of a polynomial function $h(\mathbf{z})$ with non-isolated singularities. (Actually, in \cite{O1}, the second author investigated the Milnor fibration and the Thom condition in the more general case of a ``mixed'' polynomial function $h(\mathbf{z},\overline{\mathbf{z}})$.) A crucial ingredient, introduced in \cite{O1}, to handle the essential non-compact faces is the ``local tameness.'' We shall see below that the local tameness---more precisely, a uniform version in $t$ of the local tameness---also plays a key role in our generalization of Brian\c con's assertion.

\section{Non-compact Newton boundary and local tameness}\label{sect-ncnblt}

In this section, we recall important definitions---due to A. G. Kouchnirenko \cite{K} and the second author \cite{O2,O1}---that will be used in this paper. A special emphasis will be given to the ``local tameness'' introduced by the second author in \cite{O1} and which will be crucial for our purpose.

Let $\mathbf{z}:=(z_1,\ldots, z_n)$ be coordinates for $\mathbb{C}^n$, let $U$ be an open neighbourhood of the origin $\mathbf{0}\in\mathbb{C}^n$, and let 
\begin{equation*}
h\colon(U,\mathbf{0})\to(\mathbb{C},0), \ \mathbf{z}\mapsto h(\mathbf{z})=\sum_\alpha c_\alpha\, \mathbf{z}^\alpha
\end{equation*}
be a polynomial function, where $\alpha:=(\alpha_1,\ldots,\alpha_n)\in\mathbb{N}^n$,  $c_\alpha\in\mathbb{C}$ and $\mathbf{z}^{\alpha}:=z_1^{\alpha_1}\cdots z_n^{\alpha_n}$. As usual, we write $V(h)$ for the hypersurface in $U\subseteq\mathbb{C}^n$ defined by $h$.
For any sub\penalty 5000 set $I\subseteq\{1,\ldots, n\}$, we set
\begin{align*}
& \mathbb{C}^I:=\{(z_1,\ldots, z_n)\in \mathbb{C}^n\ ;\ z_i=0 \mbox{ if } i\notin I\}, \\
& \mathbb{C}^{*I}:=\{(z_1,\ldots, z_n)\in \mathbb{C}^n\ ;\ z_i=0 \mbox{ if and only if } i\notin I\}.
\end{align*}
In particular, $\mathbb{C}^{\emptyset}=\mathbb{C}^{*\emptyset}=\{\mathbf{0}\}$ and $\mathbb{C}^{*\{1,\ldots,n\}}=(\mathbb{C}^*)^n$. (As usual, $\mathbb{C}^*:=\mathbb{C}\setminus \{\mathbf{0}\}$.)

The \emph{Newton polygon} of $h$ at $\mathbf{0}$ with respect to the coordinates $\mathbf{z}$ (denoted by $\Gamma_{\! +}(h;\mathbf{z})$) is the convex hull in $\mathbb{R}_+^n$ of the set
\begin{equation*}
\bigcup_{c_\alpha\not=0} (\alpha+\mathbb{R}_+^n),
\end{equation*}
where $\mathbb{R}_+^n=\{\mathbf{x}:=(x_1,\ldots, x_n)\in \mathbb{R}^n\ ;\ x_i\geq 0 \hbox{ for } 1\leq i\leq n\}$.
The \emph{Newton boundary} of $h$ at $\mathbf{0}$ with respect to $\mathbf{z}$ (denoted by $\Gamma(h;\mathbf{z})$) is the union of the compact faces of $\Gamma_{\! +}(h;\mathbf{z})$. 
For any system of weights $\mathbf{w}:=(w_1,\ldots,w_n)\in\mathbb{N}^n\setminus\{\mathbf{0}\}$, there is a linear map $\mathbb{R}^n\to\mathbb{R}$ given by
\begin{equation*}
\mathbf{x}:=(x_1,\ldots, x_n)\mapsto\sum_{1\leq i\leq n} x_i w_i.
\end{equation*}
Let $l_\mathbf{w}$ be the restriction of this map to $\Gamma_{+}(h;\mathbf{z})$, let $d_\mathbf{w}$ be the minimal value of $l_\mathbf{w}$, and let 
 $\Delta_\mathbf{w}$ be the face of $\Gamma_{+}(h;\mathbf{z})$ defined by the locus where $l_\mathbf{w}$ takes this minimal value. It is easy to see that
\begin{equation*}
d_\mathbf{w}:=\inf_{c_{\alpha}\not=0} \deg_{\mathbf{w}}(\mathbf{z}^\alpha),
\end{equation*}
where $\deg_{\mathbf{w}}(\mathbf{z}^\alpha)$ is the $\mathbf{w}$-degree of the monomial $\mathbf{z}^\alpha$, which is defined by
\begin{equation*}
\deg_{\mathbf{w}}(\mathbf{z}^\alpha):=\sum_{1\leq i\leq n} \alpha_i w_i=l_\mathbf{w}(\alpha).
\end{equation*}
Note that $\Delta_\mathbf{w}=\{\mathbf{x}\in \Gamma_{+}(h;\mathbf{z})\ ;\ l_\mathbf{w}(\mathbf{x})=d_\mathbf{w}\}$, and if $w_i>0$ for each $1\leq i\leq n$, then $\Delta_\mathbf{w}$ is a (compact) face of the Newton boundary $\Gamma(h;\mathbf{z})$.

\begin{definition}\label{def-mnbencf}
The \emph{non-compact Newton boundary} of~$h$ at $\mathbf{0}$ with respect to $\mathbf{z}$ (denoted by $\Gamma_{nc}(h;\mathbf{z})$) is obtained from the usual Newton boundary $\Gamma(h;\mathbf{z})$ by adding the ``essential'' non-compact faces. Here, a non-compact face $\Delta$ is said to be \emph{essential} if there are weights $\mathbf{w}:=(w_1,\ldots,w_n)\in\mathbb{N}^n\setminus \{\mathbf{0}\}$ such that the following two conditions hold:
\begin{enumerate}
\item[(i)]
$\Delta=\Delta_\mathbf{w}$ (i.e., $\Delta$ is the face defined by the locus where $l_\mathbf{w}$ takes its minimal value) and $h_{\mid \mathbb{C}^{I_\mathbf{w}}}=0$, where $I_\mathbf{w}:=\bigl\{i\in\{1,\ldots,n\}\ ;\, w_i=0\bigr\}$;
\item[(ii)]
for any $i\in I_\mathbf{w}$ and any point $\alpha\in\Delta$, the half-line $\alpha+\mathbb{R}_{+}\mathbf{e}_i$ is contained in $\Delta$, where $\mathbf{e}_i$ is the unit vector in the direction of the $x_i$-axis.
\end{enumerate}
\end{definition}

The set $I_\mathbf{w}$ does not depend on the choice of the weights $\mathbf{w}$. It is called the \emph{non-compact direction} of $\Delta$ and is denoted by $I_\Delta$.

\begin{example}[cf.~\cite{O1}]
If $h(z_1,z_2,z_3)=z_1^3+z_2^3+z_2z_3^2$, then the non-compact face $\Delta:=\overline{AC}+\mathbb{R}_+\mathbf{e}_3$ is essential, where $\overline{AC}$ is the edge with endpoints $A=(3,0,0)$ and $C=(0,1,2)$. Indeed, we can take $\mathbf{w}=(1,3,0)$. Then $\Delta=\Delta_{\mathbf{w}}$, $I_\Delta=\{3\}$ and $h(0,0,z_3)=0$ for any $z_3$. On the other hand, the non-compact face containing the edge $\overline{AB}$ (respectively, the edge $\overline{BC}$), where $B=(0,3,0)$, is not essential. Indeed, $h$ does not identically vanish neither on $\mathbb{C}^{\{1,2\}}$ nor on $\mathbb{C}^{\{2,3\}}$. See Figure \ref{figure1}.
\end{example}

\begin{remark}
Note that an essential non-compact face is not necessarily a maximal face of the Newton polygon. Indeed, in the above example, the $1$-dimensional non-compact face $\Xi:=C+\mathbb{R}_+\mathbf{e}_3$ is also essential. Indeed, we can take $\mathbf{w}'=(1,2,0)$. Then $\Xi=\Delta_{\mathbf{w}'}$, $I_\Xi=\{3\}$ and $h(0,0,z_3)=0$ for any $z_3$.
\end{remark}

\begin{figure}[t]
\includegraphics[width=16cm,height=5cm]{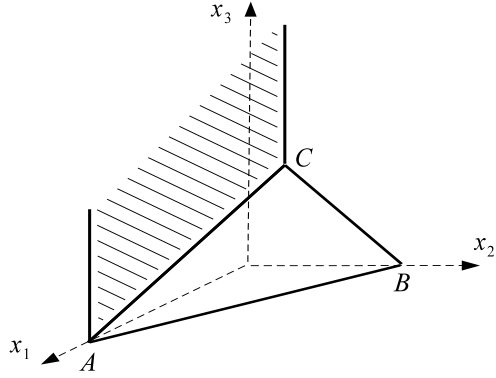}
\caption{Essential and non-essential non-compact faces}
\label{figure1}
\end{figure}

\begin{definition}\label{def-nd}
The function $h$ is said to be \emph{non-degenerate} if for any (compact) face $\Delta\subseteq \Gamma(h;\mathbf{z})$, the face function
\begin{equation*}
h_\Delta(\mathbf{z}):=\sum_{\alpha\in \Delta} c_\alpha\, \mathbf{z}^\alpha
\end{equation*}
has no critical point on $(\mathbb{C}^*)^n$.
\end{definition}

Let $\mathscr{I}_{nv}(h)$ (respectively, $\mathscr{I}_{v}(h)$) be the set of all subsets $I\subseteq \{1,\ldots,n\}$ such that $h_{\mid\mathbb{C}^I}\not=0$ (respectively, $h_{\mid\mathbb{C}^I}=0$). For any $I\in\mathscr{I}_{nv}(h)$ (respectively, any $I\in\mathscr{I}_{v}(h)$), the subspace $\mathbb{C}^I$ is called a \emph{non-vanishing} (respectively, a \emph{vanishing}) coordinates subspace.
For any $u_{i_1},\ldots, u_{i_m}\in\mathbb{C}^*$ ($m\leq n$), we set
\begin{equation*} 
\mathbb{C}^{*\{1,\ldots,n\}}_{u_{i_1},\ldots, u_{i_m}}:=
\bigl\{(z_1,\ldots,z_n)\in(\mathbb{C}^*)^n\, ;\, z_{i_j}=u_{i_j} \mbox{ for } 1\leq j\leq m\bigr\}.
\end{equation*}

\begin{definition}\label{definitionlocallytame}
Let $\Delta\subseteq\Gamma_{nc}(h;\mathbf{z})$ be an essential non-compact face, and let $\mathbf{w}=(w_1,\ldots,w_n)$ be a system of weights satisfying the conditions (i) and (ii) of Definition \ref{def-mnbencf}. Suppose that $I_\Delta:=I_{\mathbf{w}}=\{i_1,\ldots,i_m\}$ (i.e., $w_i=0$ if and only if $i\in\{i_1,\ldots,i_m\}$). We say that the face function 
\begin{equation*}
h_\Delta(\mathbf{z}):=\sum_{\alpha\in \Delta} c_\alpha\, \mathbf{z}^\alpha
\end{equation*}
is \emph{locally tame} if there exists a positive number $r(h_\Delta)>0$ such that for any non-zero complex numbers $u_{i_1},\ldots, u_{i_m}\in\mathbb{C}^*$ with 
\begin{equation*}
\vert u_{i_1}\vert^2+\cdots +\vert u_{i_m}\vert^2 < r(h_\Delta)^2,
\end{equation*} 
$h_\Delta$ has no critical point in $\mathbb{C}^{*\{1,\ldots,n\}}_{u_{i_1},\ldots, u_{i_m}}$ as a function of the $n-m$ variables $z_{i_{m+1}},\ldots,z_{i_n}$. (Here, $\{i_{m+1},\ldots,i_n\}=\{1,\ldots, n\}\setminus \{i_{1},\ldots,i_m\}$.)
We say that $h$ is \emph{locally tame along a vanishing coordinates subspace $\mathbb{C}^I$} if for any essential non-compact face $\Delta\subseteq\Gamma_{nc}(h;\mathbf{z})$ with $I_{\Delta}=I$, the face function $h_\Delta$ is locally tame. Finally, we say that $h$ is \emph{locally tame along the vanishing coordinates subspaces} if it is locally tame along $\mathbb{C}^I$ for any $I\in\mathscr{I}_v(h)$.
\end{definition}

\begin{notation}\label{notationrayon}
For any $I\in\mathscr{I}_v(h)$, let
\begin{equation*}
r_I(h):=\inf_{I_\Delta=I} r(h_\Delta).
\end{equation*} 
Also, let
\begin{equation*}
r_{nc}(h):=\inf_{I\in\mathscr{I}_v(h)} r_I(h).
\end{equation*}
\end{notation}

\begin{example}
If $h(z_1,z_2)=z_1^2z_2^3+z_1^3z_2^2+2z_1^2z_2^4$, then $\Gamma_{nc}(h;\mathbf{z})$ has two essential non-compact faces $\Delta_1:=A+\mathbb{R}_+\mathbf{e}_2$ and $\Delta_2:=B+\mathbb{R}_+\mathbf{e}_1$, where $A=(2,3)$ and $B=(3,2)$. Here, $I_{\Delta_1}=\{2\}$ and $I_{\Delta_2}=\{1\}$. For any $u_2\in\mathbb{C}^*$ with $\vert u_2\vert<1/2$, the function 
\begin{equation*}
z_1\mapsto h_{\Delta_1}(z_1,u_2)=z_1^2u_2^3+2z_1^2u_2^4
\end{equation*}
of the variable $z_1$ has no critical point on $\mathbb{C}^{*\{1,2\}}_{u_2}$. Thus the face function $h_{\Delta_1}$ is locally tame (we can take $r(h_{\Delta_1})=1/2$). Similarly, for any $u_1\in\mathbb{C}^*$, the function 
\begin{equation*}
z_2\mapsto h_{\Delta_2}(u_1,z_2)=u_1^3z_2^2
\end{equation*}
of the variable $z_2$ has no critical point on $\mathbb{C}^{*\{1,2\}}_{u_1}$, and hence the face function $h_{\Delta_2}$ is locally tame. Altogether, $h$ is locally tame along the vanishing coordinates subspaces.
\end{example}

\begin{example}
If $h(z_1,z_2,z_3)=z_1^2z_3^2-z_2^3z_3^2+z_3^3$, then $\Delta:=\overline{AB}+\mathbb{R}_+\mathbf{e}_1+\mathbb{R}_+\mathbf{e}_2$ is an essential non-compact face, where $\overline{AB}$ is the edge with endpoints $A=(2,0,2)$ and $B=(0,3,2)$. Here, $I_\Delta=\{1,2\}$. For any positive number $r>0$, there exist $u_1,u_2\in\mathbb{C}^*$ such that $\vert u_1\vert^2+\vert u_2\vert^2<r^2$ but the function
\begin{equation*}
z_3\mapsto h_{\Delta}(u_1,u_2,z_3)=u_1^2z_3^2-u_2^3z_3^2
\end{equation*}
of the variable $z_3$ has critical points on $\mathbb{C}^{*\{1,2,3\}}_{u_1,u_2}$. Indeed, the derivative of $z_3\mapsto h_{\Delta}(u_1,u_2,z_3)$ is zero along the curve $u_1^2-u_2^3=0$. It follows that $h_\Delta$ (and therefore $h$) is not locally tame.
\end{example}

\section{Admissible families and Whitney equisingularity}\label{sect-ult}

Let $(t,\mathbf{z}):=(t,z_1,\ldots, z_n)$ be coordinates for $\mathbb{C}\times \mathbb{C}^n$, let $U$ be an open neighbourhood of $\mathbf{0}\in\mathbb{C}^n$, let $D$ be an open disc centered at $0\in\mathbb{C}$, and let
\begin{equation*}
f\colon (D\times U, D\times \{\mathbf{0}\}) \rightarrow (\mathbb{C},0),\ (t,\mathbf{z}) \mapsto f(t,\mathbf{z}),
\end{equation*} 
be a polynomial function. (This notation implies that $f(D\times \{\mathbf{0}\})=\{0\}$.) As above, we write $f_t(\mathbf{z}):=f(t,\mathbf{z})$ and we denote by $V(f_t)$ the hypersurface in $U\subseteq\mathbb{C}^n$ defined by $f_t$.
In \S \ref{sub-admfam} of the present section, we introduce a condition (admissibility condition) that will guarantee Whitney equisingularity for families of non-isolated singularities. Roughly, a family $\{f_t\}$ is admissible if for all $t$ small enough, the non-compact Newton boundary $\Gamma_{nc}(f_t;\mathbf{z})$ is independent of $t$, the polynomial function $f_t$ is non-degenerate, and the radius $r_{nc}(f_t)$ which appear in Definition \ref{definitionlocallytame} and Notation \ref{notationrayon} is greater than or equal to a fixed positive number $\rho>0$. But before to go into further details, we first need to show that if the Newton boundary $\Gamma(f_t;\mathbf{z})$ is independent of $t$ and $f_t$ is non-degenerate for all small $t$, then, in a neighbourhood of the origin $\mathbf{0}\in\mathbb{C}^n$, the hypersurface $V(f_t)$ is smooth along $\mathbb{C}^{*I}$ for any $I\in\mathscr{I}_{nv}(f_t)$ and any $t$ small enough (cf.~\S \ref{sub-unismooth}).

\subsection{Smoothness along the non-vanishing coordinates subspaces}\label{sub-unismooth}

The following proposition is a uniform version of \cite[Chapter III, Lemma (2.2)]{O4} and \cite[Theorem 19]{O3}.

\begin{proposition}\label{lemmasmooth}
Suppose that for all $t$ sufficiently small, the following two conditions are satisfied:
\begin{enumerate}
\item
the Newton boundary $\Gamma(f_t;\mathbf{z})$ of $f_t$ at $\mathbf{0}$ with respect to the coordinates $\mathbf{z}$ is independent of $t$ (in particular, $\mathscr{I}_{nv}(f_t)$ is independent of $t$);
\item
the polynomial function $f_t$ is non-degenerate. 
\end{enumerate}
Then there exists a positive number $R>0$ such that for any $I\in\mathscr{I}_{nv}(f_0)$ and any $t$ sufficiently small, the set $V(f_t)\cap \mathbb{C}^{*I}\cap B_R$ is non-singular and intersects transversely with $S_r$ for any $r<R$, where $B_R$ (respectively, $S_r$) is the open ball (respectively, the sphere) with centre the origin $\mathbf{0}\in\mathbb{C}^n$ and radius $R$ (respectively, $r$).
\end{proposition}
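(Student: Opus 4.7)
My plan is to upgrade the parameter-free smoothness statements \cite[Chapter III, Lemma (2.2)]{O4} and \cite[Theorem 19]{O3} to a uniform version in $t$ by combining the Curve Selection Lemma with a leading-order analysis on the Newton polyhedron. As a preliminary reduction, the hypothesis that $\Gamma(f_t;\mathbf{z})$ is independent of $t$ forces the coefficient at each vertex of $\Gamma(f_0;\mathbf{z})$ to be a continuous function of $t$ which is non-zero at $t=0$, so $\mathscr{I}_{nv}(f_t)=\mathscr{I}_{nv}(f_0)$ for all small $t$; as this set is finite, it suffices to prove the conclusion for each fixed $I\in\mathscr{I}_{nv}(f_0)$ separately and then take the minimum of the resulting radii.

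I argue by contradiction. If no uniform $R$ works for a given $I$, I obtain sequences $t_k\to 0$ and $\mathbf{z}^{(k)}\to\mathbf{0}$ in $\mathbb{C}^{*I}\cap V(f_{t_k})$ at which $V(f_{t_k})\cap\mathbb{C}^{*I}$ is either singular or non-transverse to $S_{|\mathbf{z}^{(k)}|}$. Both obstructions can be repackaged uniformly as the existence of $[\lambda_k:\mu_k]\in\mathbb{P}^1$ with $\lambda_k\,\overline{z_i^{(k)}}=\mu_k\,\partial f_{t_k}/\partial z_i(\mathbf{z}^{(k)})$ for every $i\in I$; the singular case corresponds to $\lambda_k=0$ and the smooth non-transverse one to $\lambda_k\neq 0\neq\mu_k$. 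The Curve Selection Lemma applied to the resulting semi-algebraic bad set
\[
\Sigma_I:=\bigl\{(t,\mathbf{z},[\lambda:\mu])\in D\times(U\cap\mathbb{C}^{*I})\times\mathbb{P}^1\ ;\ f_t(\mathbf{z})=0,\ \lambda\overline{z_i}=\mu\,\partial f_t/\partial z_i(\mathbf{z})\ \forall i\in I\bigr\}
\]
(which accumulates at $\{(0,\mathbf{0})\}\times\mathbb{P}^1$) then yields a real-analytic arc $s\mapsto(t(s),\mathbf{z}(s),\lambda(s),\mu(s))$ contained in $\Sigma_I$ for $s\in(0,\epsilon)$ and starting at $(0,\mathbf{0})$ as $s\to 0$.

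After a Puiseux reparameterization, I write $z_i(s)=a_i\,s^{w_i}+\mathrm{h.o.t.}$ with $a_i\in\mathbb{C}^*$ and $w_i\in\mathbb{Z}_{>0}$ for $i\in I$, and $z_j(s)\equiv 0$ for $j\notin I$. Extending $(w_i)_{i\in I}$ to a weight $\mathbf{w}$ on all coordinates by taking $w_j$ very large for $j\notin I$ makes the associated face $\Delta=\Delta_{\mathbf{w}}\subseteq\Gamma(f_0;\mathbf{z})$ lie entirely in $\mathbb{R}^I$, so that $f_{0,\Delta}$ is a polynomial in $(z_i)_{i\in I}$ only. Because the vertex-coefficients $c_\alpha(t(s))$ tend to the non-zero limits $c_\alpha(0)$, the leading $s$-coefficients of $f(t(s),\mathbf{z}(s))$ and of $\partial f/\partial z_i(t(s),\mathbf{z}(s))$ ($i\in I$) are, respectively, $f_{0,\Delta}(\mathbf{a}_I)$ and $\partial f_{0,\Delta}/\partial z_i(\mathbf{a}_I)$, where $\mathbf{a}_I:=(a_i)_{i\in I}$. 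From $f(t(s),\mathbf{z}(s))\equiv 0$ I obtain $f_{0,\Delta}(\mathbf{a}_I)=0$, and an order-by-order analysis of $\lambda(s)\overline{z_i(s)}=\mu(s)\,\partial f/\partial z_i(t(s),\mathbf{z}(s))$ produces a critical point of $f_{0,\Delta}|_{\mathbb{C}^{I}}$ at $\mathbf{a}_I$: in the singular branch ($\lambda\equiv 0$) directly, and in the non-transverse branch by first showing that $\partial f_{0,\Delta}/\partial z_i(\mathbf{a}_I)=0$ for every $i\in I$ with $w_i>w_{i_0}:=\min_{j\in I}w_j$ and then combining this with Euler's relation $\sum_{i\in I}w_i\,a_i\,\partial f_{0,\Delta}/\partial z_i(\mathbf{a}_I)=d_{\mathbf{w}}f_{0,\Delta}(\mathbf{a}_I)=0$ to force $\sum_{i\in I,\,w_i=w_{i_0}}w_i|a_i|^2=0$, which is absurd. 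Extending $\mathbf{a}_I$ by arbitrary non-zero values in the remaining coordinates yields a critical point of $f_{0,\Delta}$ on $(\mathbb{C}^*)^n$, contradicting the non-degeneracy of $f_0$. The main technical subtlety is the $t$-bookkeeping: non-vertex terms of $f$ whose coefficients happen to vanish at $t=0$ must be shown to contribute only at strictly higher orders in $s$ along the arc, and this is precisely what the constancy of $\Gamma(f_t;\mathbf{z})$ guarantees.
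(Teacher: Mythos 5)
Your argument is correct and follows the paper's proof in its essentials: reduce to a single $I\in\mathscr{I}_{nv}(f_0)$, argue by contradiction via the Curve Selection Lemma, extract a real-analytic arc with Taylor data $(a_i,w_i)$, read off the leading order against the Newton polyhedron, and use the Euler identity to close the non-transverse case. There are two streamlining choices that differ from the paper's exposition and are worth noting. First, you fold singularity and non-transversality into one semi-algebraic set $\Sigma_I$ by carrying a $\mathbb{P}^1$-parameter $[\lambda:\mu]$ (with $\lambda=0$ marking the singular branch and $\mu\ne 0$ forced automatically on $\mathbb{C}^{*I}$), so a single application of curve selection suffices, whereas the paper runs two separate curve-selection arguments; your packaging is a bit slicker, though it obscures slightly that once $\mu$ is normalized to $1$ the arc lies entirely in one branch (either $\lambda\equiv 0$ or $\lambda(s)=\lambda_0 s^\omega+\cdots$ with $\lambda_0\ne 0$), which is what makes the Euler step go through. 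Second, instead of restricting $f_0$ to $\mathbb{C}^I$ and invoking that the restriction inherits non-degeneracy (Proposition~7 of \cite{O3}), you extend the weight vector by taking $w_j\gg 0$ for $j\notin I$ so that $\Delta_\mathbf{w}$ is a compact face of $\Gamma(f_0;\mathbf{z})$ contained in $\mathbb{R}^I$; since $(f_0)_{\Delta_\mathbf{w}}$ then only depends on $(z_i)_{i\in I}$, a critical point $\mathbf{a}_I\in(\mathbb{C}^*)^I$ extends trivially to one in $(\mathbb{C}^*)^n$, so you can appeal directly to the non-degeneracy of $f_0$ and avoid the external reference. This extension step uses precisely that $I\in\mathscr{I}_{nv}(f_0)$, i.e.\ $\Gamma_+(f_0;\mathbf{z})\cap\mathbb{R}^I\ne\emptyset$, which is a hypothesis of the proposition. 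One phrasing quibble: in the non-transverse branch your analysis does \emph{not} literally ``produce a critical point''--- it combines $\partial(f_0)_\Delta/\partial z_i(\mathbf{a}_I)=0$ for $w_i>w_{i_0}$ with the Euler identity and the relation $\partial(f_0)_\Delta/\partial z_i(\mathbf{a}_I)=\overline{\lambda_0 a_i}$ for $w_i=w_{i_0}$ (which you should state explicitly) to derive $\sum_{w_i=w_{i_0}}w_i|a_i|^2=0$, an outright absurdity rather than a critical point. Finally, your remark about the $t$-bookkeeping is correct and well placed: the constancy of $\Gamma_+(f_t;\mathbf{z})$ is exactly what guarantees that no monomial with $t$-vanishing coefficient can drop below $\mathbf{w}$-degree $d_\mathbf{w}$ along the arc.
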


\begin{proof}
As there are only finitely many subsets $I\in\mathscr{I}_{nv}(f_0)$, it suffices to show that for a fixed $I\in\mathscr{I}_{nv}(f_0)$, there is $R>0$ such that for any $t$ small enough, $V(f_t)\cap \mathbb{C}^{*I}\cap B_R$ is non-singular and intersects transversely with the sphere $S_r$ for any $r\leq R$. To simplify, we may assume that $I=\{1,\ldots,m\}$.

We start with the ``smoothness'' assertion. We argue by contradiction. Suppose that there exists a sequence $\{(t_N,\mathbf{z}_N)\}$ of points in $V(f)\cap(D\times\mathbb{C}^{*I})$ converging to $(0,\mathbf{0})$ and such that $\mathbf{z}_N$ is a critical point of the restriction of $f_{t_N}$ to $\mathbb{C}^{I}$. 
Then $(0,\mathbf{0})$ is in the closure of the set
\begin{align*}
W:=\biggl\{(t,\mathbf{z})\in D\times\mathbb{C}^{*I}\, ;\, {f_t}_{\mid \mathbb{C}^{I}}(\mathbf{z})=0\mbox{ and }\frac{\partial ({f_t}_{\mid \mathbb{C}^{I}})}{\partial z_{i}}(\mathbf{z})=0\mbox{ for }1\leq i\leq m\biggr\}.
\end{align*}
Therefore, by the curve selection lemma \cite{Milnor}, there is a real analytic curve 
\begin{align*}
(t(s),\mathbf{z}(s))=(t(s),z_1(s),\ldots,z_m(s),0,\ldots,0) 
\end{align*}
such that
$(t(0),\mathbf{z}(0))=(0,\mathbf{0})$ and $(t(s),\mathbf{z}(s))\in W$ for any $s\not=0$. For $1\leq i\leq m$, consider the Taylor expansions 
\begin{align*}
& t(s)=t_0s^{v}+\cdots,\\
& z_{i}(s)=a_{i}\, s^{w_{i}}+\cdots,
\end{align*}
where $t_0,a_{i}\not=0$ and $v,w_{i}>0$.  Here, the dots stand for the higher order terms. Let $\mathbf{a}:=(a_1,\ldots,a_m,0,\ldots,0)\in\mathbb{C}^{*I}$ and $\mathbf{w}:=(w_1,\ldots,w_m,0,\ldots,0)\in\mathbb{N}^n\setminus\{\mathbf{0}\}$, and let $\Delta$ be the face of $\Gamma\bigl({f_{t(s)}}_{\mid \mathbb{C}^{I}};\mathbf{z}\bigr)=\Gamma\bigl({f_0}_{\mid \mathbb{C}^{I}};\mathbf{z}\bigr)$  defined by the locus
where the map
\begin{align*}
\mathbf{x}:=(x_1,\ldots, x_m,0,\ldots,0)\in \Gamma({f_{t(s)}}_{\mid \mathbb{C}^{I}};\mathbf{z}) \mapsto 
\sum_{1\leq i\leq m} x_iw_i
\end{align*}
takes its minimal value $d$. For any $1\leq i\leq m$ and any $s\not=0$,
\begin{align*}
0 = \frac{\partial \bigl({f_{t(s)}}_{\mid \mathbb{C}^{I}}\bigr)}
{\partial z_{i}}(\mathbf{z}(s)) = 
\frac{\partial \bigl({f_{t(s)}}_{\mid \mathbb{C}^{I}}\bigr)_{\Delta}}
{\partial z_{i}}(\mathbf{a})\, s^{d-w_{i}}+\cdots,
\end{align*}
where $\bigl({f_{t(s)}}_{\mid \mathbb{C}^{I}}\bigr)_{\Delta}$ is the face function associated with ${f_{t(s)}}_{\mid \mathbb{C}^{I}}$ and $\Delta$.
It follows that
\begin{align*}
\frac{\partial \bigl({f_{0}}_{\mid \mathbb{C}^{I}}\bigr)_{\Delta}}{\partial z_{i}}(\mathbf{a})=0
\end{align*}
for any $1\leq i\leq m$. Therefore, $\mathbf{a}\in\mathbb{C}^{*I}$ is a critical point of $\bigl({f_{0}}_{\mid \mathbb{C}^{I}}\bigr)_{\Delta}\colon \mathbb{C}^{I}\to\mathbb{C}$. In particular, this implies that ${f_{0}}_{\mid \mathbb{C}^{I}}$ is \emph{not} non-degenerate as a function of the variables $z_1,\ldots, z_m$. This contradicts Proposition 7 of \cite{O3} which says that if a polynomial function ${f_{0}}$ is non-degenerate and if ${f_{0}}_{\mid \mathbb{C}^{I}}\not=0$, then ${f_{0}}_{\mid \mathbb{C}^{I}}$ must be non-degenerate as well. 

To prove the ``transversality'' assertion, we also argue by contradiction. Suppose that there exists a sequence $\{(t_N,\mathbf{z}_N)\}$ of points in $V(f)\cap(D\times \mathbb{C}^{*I})$ converging to $(0,\mathbf{0})$ and such that $V(f_{t_N})\cap \mathbb{C}^{*I}$ does not intersect the sphere $S_{\Vert z_N\Vert}$ transversely at $\mathbf{z}_N$. Then $(0,\mathbf{0})$ is in the closure of the set consisting of points $(t,\mathbf{z})\in D\times \mathbb{C}^{*I}$ such that
\begin{align*}
{f_t}_{\mid \mathbb{C}^{I}}(\mathbf{z})=0
\quad\mbox{and}\quad\mbox{grad} {f_t}_{\mid \mathbb{C}^{I}}(\mathbf{z})=\lambda \mathbf{z}
\mbox{ for } \lambda\in\mathbb{C}^*.
\end{align*}
Here, $\mbox{grad} {f_t}_{\mid \mathbb{C}^{I}}(\mathbf{z})$ is the gradient vector of ${f_t}_{\mid \mathbb{C}^{I}}$ at $\mathbf{z}$, that is,
\begin{align*}
\mbox{grad} {f_t}_{\mid \mathbb{C}^{I}}(\mathbf{z}):=
\biggl( \overline{\frac{\partial {f_t}_{\mid \mathbb{C}^{I}}}{\partial z_1} (\mathbf{z})}, \ldots, \overline{\frac{\partial {f_t}_{\mid \mathbb{C}^{I}}}{\partial z_m} (\mathbf{z})},0,\ldots,0 \biggr),
\end{align*}
where the bar stands for the complex conjugation.
Thus, by the curve selection lemma, we can find a real analytic curve 
\begin{align*}
(t(s),\mathbf{z}(s))=(t(s),z_1(s),\ldots,z_m(s),0,\ldots,0) 
\end{align*}
and a Laurent series $\lambda(s)$ such that:
\begin{enumerate}
\item[(i)]
$(t(0),\mathbf{z}(0))=(0,\mathbf{0})$;
\item[(ii)]
$(t(s),\mathbf{z}(s))\in D\times \mathbb{C}^{*I}$ for $s\not=0$;
\item [(iii)]
$f_{t(s)}(\mathbf{z}(s))=0$;
\item [(iv)]
$\mbox{grad} {f_{t(s)}}_{\mid \mathbb{C}^{I}}(\mathbf{z}(s))=\lambda(s) \mathbf{z}(s)$.
\end{enumerate}
Consider the Taylor expansions 
\begin{align*}
& t(s)=t_0s^{v}+\cdots,\\
& z_{i}(s)=a_{i}\, s^{w_{i}}+\cdots \ (1\leq i\leq m),
\end{align*}
where $t_0,\, a_{i}\not=0$ and $v,\, w_{i}>0$, and the Laurent expansion
\begin{align*}
\lambda(s)=\lambda_0s^\omega+\cdots,
\end{align*}
where $\lambda_0\not=0$.
Then define $\mathbf{a}$, $\mathbf{w}$, $d$ and $\Delta$ as above. 
By reordering, we may assume that $w_1=\cdots=w_k<w_j$ ($k<j\leq m$).
Then, by (iv), we have $d-w_1=\omega+w_1$ and
\begin{align}\label{exprder}
\overline{\frac{\partial \bigl({f_{0}}_{\mid \mathbb{C}^{I}}\bigr)_{\Delta}}
{\partial z_{i}}(\mathbf{a})}=\left\{
\begin{aligned}
& \lambda_0 a_i &&\mbox{for} && 1\leq i\leq k,\\
& 0 &&\mbox{for} && k< i\leq m.
\end{aligned}
\right.
\end{align}
Since the polynomial $\bigl({f_{0}}_{\mid \mathbb{C}^{I}}\bigr)_{\Delta}$ is weighted homogeneous with respect to the weights $\mathbf{w}$ and has weighted degree $d$, it follows from the Euler identity that 
\begin{align}\label{eigl25}
d\cdot \bigl({f_{0}}_{\mid \mathbb{C}^{I}}\bigr)_{\Delta}(\mathbf{a})=\sum_{1\leq i\leq m}w_i a_i\frac{\partial \bigl({f_{0}}_{\mid \mathbb{C}^{I}}\bigr)_{\Delta}}{\partial z_{i}}(\mathbf{a}).
\end{align}
As $f(t(s),\mathbf{z}(s))=0$ for any $s$, we have $\bigl({f_{0}}_{\mid \mathbb{C}^{I}}\bigr)_{\Delta}(\mathbf{a})=0$. Therefore, by combining (\ref{exprder}) and (\ref{eigl25}), we get a contradiction:
\begin{align*}
0=\sum_{1\leq i\leq m}w_i a_i\frac{\partial \bigl({f_{0}}_{\mid \mathbb{C}^{I}}\bigr)_{\Delta}}{\partial z_{i}}(\mathbf{a})=\bar\lambda_0 \sum_{1\leq i\leq k} w_i\vert a_i\vert^2\not=0.
\end{align*}
This completes the proof of Proposition \ref{lemmasmooth}.
\end{proof}

\begin{remark}
Under the same assumption as in Proposition \ref{lemmasmooth}, the second author showed in \cite{O5} that there also exists a positive number $R'>0$ such that for any $0<R''\leq R'$, the exists $\delta(R'')>0$ such that  for any $\eta\not=0$ with $\vert\eta\vert\leq\delta(R'')$ and any $r$ with $R''\leq r\leq R'$, the set $f_t^{-1}(\eta)\cap B_{R'}$ is non-singular and transversely intersects the sphere $S_{r}$ for all $t$ small enough. 
\end{remark}

\subsection{Admissible families and the main theorem}\label{sub-admfam}
Throughout this subsection, we assume that for all $t$ sufficiently small, the following two conditions hold:
\begin{enumerate}
\item[(I)]
the non-compact Newton boundary $\Gamma_{nc}(f_t;\mathbf{z})$ of $f_t$ at $\mathbf{0}$ with respect to $\mathbf{z}$ is independent of $t$ (in particular, $\mathscr{I}_{nv}(f_t)$ and $\mathscr{I}_{v}(f_t)$ are independent of $t$);
\item[(II)]
the polynomial function $f_t$ is non-degenerate and locally tame along the vanishing coordinates subspaces (i.e., locally tame along $\mathbb{C}^I$ for any $I\in\mathscr{I}_{v}(f_t)=\mathscr{I}_{v}(f_0)$); we denote by $r_{nc}(f_t)$ the corresponding positive number  defined in Definition \ref{definitionlocallytame} and Notation \ref{notationrayon}. 
\end{enumerate}

\begin{remark}
$\Gamma_{nc}(f_t;\mathbf{z})=\Gamma_{nc}(f_0;\mathbf{z}) \Leftrightarrow \Gamma(f_t;\mathbf{z})=\Gamma(f_0;\mathbf{z}) \Leftrightarrow \Gamma_{+}(f_t;\mathbf{z})=\Gamma_{+}(f_0;\mathbf{z})$.
\end{remark}

By Proposition \ref{lemmasmooth}, we know that there exists a positive number $R>0$ such that for any $I\in\mathscr{I}_{nv}(f_t)=\mathscr{I}_{nv}(f_0)$ and any $t$ small enough, $V(f_t)\cap \mathbb{C}^{*I}\cap B_R$ is non-singular.
It follows immediately that in a sufficiently small open neighbourhood $\mathscr{U}\subseteq D\times U$ of the origin of $\mathbb{C}\times\mathbb{C}^n$, the set
$V(f)\cap (\mathbb{C}\times \mathbb{C}^{*I})$ is non-singular for any $I\in\mathscr{I}_{nv}(f_t)$. Therefore, in such a neighbourhood, we can stratify $\mathbb{C}\times \mathbb{C}^n$ in such a way that the hypersurface $V(f):=f^{-1}(0)$ is a union of strata. More precisely, we consider the following three types of strata:
\begin{enumerate}
\item[$\cdot$]
$A_I:=\mathscr{U}\cap (V(f)\cap(\mathbb{C}\times \mathbb{C}^{*I}))$ 
for $I\in\mathscr{I}_{nv}(f_0)$;
\item[$\cdot$]
$B_I:=\mathscr{U}\cap((\mathbb{C}\times \mathbb{C}^{*I})\setminus (V(f)\cap(\mathbb{C}\times \mathbb{C}^{*I})))$ for $I\in\mathscr{I}_{nv}(f_0)$;
\item[$\cdot$]
$C_I:=\mathscr{U}\cap(\mathbb{C}\times \mathbb{C}^{*I})$ 
for $I\in\mathscr{I}_v(f_0)$.
\end{enumerate} 
The (finite) collection 
\begin{equation*}
\mathscr{S}:=\{A_I,B_I\, ;\, I\in\mathscr{I}_{nv}(f_0)\}\cup \{C_I\, ;\, I\in\mathscr{I}_{v}(f_0)\}
\end{equation*}
is a stratification (i.e., a partition into complex analytic submanifolds) of the set $\mathscr{U}\cap(\mathbb{C}\times\mathbb{C}^n)$ for which $\mathscr{U}\cap V(f)$ is a union of strata. Note that for $I=\emptyset$, which is an element of $\mathscr{I}_{v}(f_0)$, the stratum $C_{\emptyset}:=\mathscr{U}\cap(\mathbb{C}\times \mathbb{C}^{*\emptyset})$ of $\mathscr{S}$ is nothing but the $t$-axis $\mathscr{U}\cap(\mathbb{C}\times \{\mathbf{0}\})$.

\begin{remark}
A similar stratification but for a \emph{single} polynomial function $h(\mathbf{z})$ (not for a family) is already considered in \cite{O1} where the second author shows that if $h$ is non-degenerate and locally tame along the vanishing coordinates subspaces, then it satisfies Thom's $a_h$ condition with respect to this stratification.
\end{remark}

\begin{definition}
We say that the family $\{f_t\}$ is \emph{admissible} (at $t=0$) if it satisfies the conditions (I) and (II) above and if, furthermore, there exists a positive number $\rho>0$ such that for any sufficiently small $t$, 
\begin{equation*}
\inf\{R,r_{nc}(f_t)\} \geq \rho,
\end{equation*}
where $R$ is given by Proposition \ref{lemmasmooth}.
\end{definition}

In particular, if the family $\{f_t\}$ is admissible, then it is \emph{uniformly} locally tame along the vanishing coordinates subspaces---that is, $f_t$ is locally tame along $\mathbb{C}^I$ for any $I\in\mathscr{I}_{v}(f_0)$ and $r_{nc}(f_t)\geq\rho$ for all small $t$.

Here is our main result.

\begin{theorem}\label{mt2}
If the family of polynomial functions $\{f_t\}$ is admissible, then the canonical stratification $\mathscr{S}$ of $\mathscr{U}\cap(\mathbb{C}\times\mathbb{C}^n)$ described above is a Whitney stratification, and hence, the corresponding family of hypersurfaces $\{V(f_t)\}$ is Whitney equisingular.
\end{theorem}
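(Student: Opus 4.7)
The plan is to verify Whitney's condition (b) for every pair $(X, Y)$ of strata of $\mathscr{S}$ with $Y \subseteq \overline{X} \setminus X$; condition (a) will then follow automatically since all strata are complex analytic submanifolds, and the Whitney equisingularity of $\{V(f_t)\}$ is built into the definition of $\mathscr{S}$ (the $t$-axis $C_\emptyset$ is a stratum). The relevant pairs are of the four types $(A_I, A_J)$, $(B_I, B_J)$, $(A_I, C_J)$ and $(B_I, C_J)$, with $J \subsetneq I$ (and $J \in \mathscr{I}_v(f_0)$ in the last two); the most delicate ones are those involving $C_J$, because then $Y$ lies in the singular locus of $V(f_t)$ and the Newton polygon argument has to invoke the essential non-compact faces.

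For a fixed pair, I would argue by contradiction. Assume Whitney (b) fails at some $p_0 \in Y$; by a translation in $t$ I may assume the first coordinate of $p_0$ is zero, and I will treat the main case $p_0 = (0, \mathbf{0})$ explicitly (a point of $Y$ with non-zero $\mathbf{z}$-coordinates in $\mathbb{C}^{*J}$ is handled by the same argument applied to a reduced polynomial in fewer variables). The failure of (b) yields sequences $\{P_n\} \subseteq X$ and $\{Q_n\} \subseteq Y$ with $P_n, Q_n \to p_0$, with secants $\overline{P_n Q_n}$ converging to a line $\ell$ and tangent planes $T_{P_n} X$ converging to a plane $T$, such that $\ell \not\subseteq T$. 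By the curve selection lemma, I replace the sequences by a real analytic curve
\begin{equation*}
\gamma(s) = \bigl(t_0 s^v + \cdots,\ a_1 s^{w_1} + \cdots,\ \ldots,\ a_n s^{w_n} + \cdots\bigr)
\end{equation*}
in $X$ (with $a_i \neq 0 \iff i \in I$) and an auxiliary curve in $Y$ along which (b) keeps failing. The weight vector $\mathbf{w} = (w_1, \ldots, w_n)$ selects a face $\Delta = \Delta_{\mathbf{w}}$ of $\Gamma_+(f_0; \mathbf{z})$, and expanding $f(\gamma(s)) \equiv 0$ and the failure of (b) in powers of $s$ and reading off leading terms will translate the assumption into the existence of a critical point $\mathbf{a} = (a_1, \ldots, a_n)$ of the face function $f_{0, \Delta}$, either on $(\mathbb{C}^*)^n$ (if $\Delta$ is compact) or on a slice $\mathbb{C}^{*\{1,\ldots,n\}}_{a_{i_1}, \ldots, a_{i_m}}$ with frozen coordinates indexed by $I_\Delta$ (if $\Delta$ is a non-compact essential face), combined with the Euler identity for the $\mathbf{w}$-weighted homogeneous polynomial $f_{0, \Delta}$.

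The contradiction then splits into two cases corresponding to the two parts of condition (II) in admissibility. If $\Delta$ is compact, the non-degeneracy of $f_0$ directly contradicts the existence of such a critical point, as in the proof of Proposition \ref{lemmasmooth}. If $\Delta$ is non-compact, then $\Delta$ is essential because $\gamma \subseteq V(f)$ reaches $C_J$ with $J \subseteq I_\Delta$, and the uniform local tameness $r_{nc}(f_t) \geq \rho$ guarantees that for $s$ small the frozen coordinates $z_{i_j}(s)$ (those with $w_{i_j} = 0$) lie in the tameness ball of every $f_t$ near $t = 0$, so no such critical point can exist---again a contradiction. The main obstacle I expect is the uniformity across $t$: the curve $\gamma$ involves a varying parameter $t(s) = t_0 s^v + \cdots$, so the polynomials $f_{t(s), \Delta}$ form a one-parameter family, and one must extract from them a single limiting face function $f_{0, \Delta}$ against which to apply non-degeneracy or tameness; this is possible precisely because condition (I) keeps the Newton polygon of $f_t$ independent of $t$ and admissibility supplies a uniform radius $\rho$. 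A secondary difficulty is the careful tracking of tangent vectors in $\mathbb{C} \times \mathbb{C}^n$: the extra variable $t$ contributes its own weight $v$, and the secant $\ell$ may have a non-zero $t$-component, which is exactly what is needed to recover the original Brian\c con assertion as the special case $Y = C_\emptyset$.
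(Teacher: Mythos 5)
Your overall plan—reduce to Whitney $(b)$ for the pairs $(A_J,C_I)$, use the curve selection lemma, introduce the weight vector $\mathbf{w}$ and the face $\Delta_\mathbf{w}$ of the ($t$-independent) Newton polygon, and then play non-degeneracy/uniform local tameness against the Euler identity for the weighted-homogeneous face function—is the same plan that the paper carries out, and all the ingredients you list (including the observation that the $t$-component of the gradient must be controlled, and that the $I=\emptyset$ case reduces to Brian\c con's situation) appear there. Your contradiction framing is logically equivalent to the paper's direct computation: there, tameness is used up front to guarantee $o(s)\le d_\mathbf{w}-w_{m+1}$, and then the limit in the Whitney condition is evaluated case by case, with the Euler identity producing the key vanishing $\sum_{i=m+1}^{m+m_1}a_i\,\partial(f_\tau)_{\Delta_\mathbf{w}}/\partial z_i(\mathbf{a})=0$.

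Where your proposal has a genuine gap is the reduction of a base point $(\tau,\mathbf{q})\in C_I$ with $\mathbf{q}\neq\mathbf{0}$ to ``the same argument applied to a reduced polynomial in fewer variables.'' This is not what the paper does, and as stated it does not work. Freezing the nonzero coordinates $z_i=q_i$ for $i\in I$ produces a polynomial whose Newton polygon, non-degeneracy, and admissibility would all have to be re-established and would not be uniform as $\mathbf{q}\to\mathbf{0}$; the restriction of $f_0$ to the slice is \emph{not} the object the hypotheses control. The hypotheses control the \emph{face function} $(f_\tau)_{\Delta_\mathbf{w}}$ on the slice $\mathbb{C}^{*\{1,\ldots,n\}}_{q_1,\ldots,q_m}$, and this is precisely what uniform local tameness ($r_{nc}(f_t)\ge\rho$) is designed for: it guarantees that for $\vert q_1\vert^2+\cdots+\vert q_m\vert^2<\rho^2$ and $\tau$ small, some partial $\partial(f_\tau)_{\Delta_\mathbf{w}}/\partial z_{i_0}(\mathbf{a})$, $i_0>m$, is nonzero, which bounds the vanishing order of $\mathrm{grad}\,f(\gamma(s))$ and in particular forces the $t$-component to drop out. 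In other words, the case $\mathbf{q}\neq\mathbf{0}$ is the \emph{main} case of the theorem (your $p_0=(0,\mathbf{0})$ case corresponds to $I=\emptyset$, where $\Delta_\mathbf{w}$ is compact and plain non-degeneracy suffices, as in the paper's Remark~\ref{rk-Ivide}), and it must be handled directly via the face function and the uniform tameness radius, not by a change of variables. With that replacement your outline coincides with the paper's proof.

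Two smaller points: you omit the pair $(C_J,C_I)$, which is also present but trivial (both strata are open in linear subspaces); and while translating in $t$ to move $\tau$ to $0$ is legitimate here (conditions (I), (II) and the uniform bound $\rho$ pass to $t=\tau$), the paper avoids it and works at $(\tau,\mathbf{q})$ directly, which is cleaner.
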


We recall that a stratification of a subset of $\mathbb{C}^N$ is a \emph{Whitney stratification} if the closure $\bar S$ of each stratum $S$ and the complement $\bar S\setminus S$ are both analytic sets, and if for any pair of strata $(S_2,S_1)$ and any point $\mathbf{p}\in S_1\cap\bar S_2$, the stratum $S_2$ is Whitney $(b)$-regular over the stratum $S_1$ at the point $\mathbf{p}$. The latter condition means that  
for any sequences of points $\{\mathbf{p}_k\}$ in $S_{1}$, $\{\mathbf{q}_k\}$ in $S_{2}$ and $\{a_k\}$ in $\mathbb{C}$ satisfying:
\begin{enumerate}
\item[(i)]
$\mathbf{p}_k\to\mathbf{p}$ and $\mathbf{q}_k\to\mathbf{p}$;
\item[(ii)]
$T_{\mathbf{q}_k} S_{2}\to T$;
\item[(iii)]
$a_k(\mathbf{p}_k-\mathbf{q}_k)\to v$;
\end{enumerate}
we have $v\in T$. (As usual, $T_{\mathbf{q}_k} S_{2}$ is the tangent space to $S_{2}$ at $\mathbf{q}_k$.) For details, we refer the reader to [2].

\begin{remark}\label{remarkstratification}
Observe that if $M$ is a smooth manifold and $N\subseteq M$ is a closed smooth submanifold, then $M\setminus N$ is Whitney $(b)$-regular over $N$ at any point.
\end{remark}

Theorem \ref{mt2} will be proved in Section \ref{pmt2}. The proof will show that for \emph{isolated} singularities, if $\Gamma(f_t;\mathbf{z})$ is independent of $t$ and $f_t$ is non-degenerate for all small $t$, then the conclusions of Theorem \ref{mt2} still holds true without assuming the uniform local tameness (cf.~Remark \ref{rk-rbt}). In other words, Theorem \ref{mt2} includes Assertion \ref{thm-briancon} as a special case. 

\begin{remark}
In \cite[\S 8]{O6}, another Whitney stratification of $\mathscr{U}\cap(\mathbb{C}\times\mathbb{C}^n)$, with the $t$-axis as a stratum and such that $\mathscr{U}\cap V(f)$ is a union of strata, is constructed under a different assumption (so-called ``simultaneous IND-condition''). However this stratification is different from our. Especially, it has a larger number of strata.
\end{remark}

Combined with the Thom-Mather first isotopy theorem (cf.~\cite{T,M,GWPL}), Theorem \ref{mt2} implies the following result.

\begin{corollary}\label{mt}
If the family of polynomial functions $\{f_t\}$ is admissible, then the corresponding family of hypersurfaces $\{V(f_t)\}$ is topologically equisingular.
\end{corollary}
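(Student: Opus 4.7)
The plan is to deduce Corollary~\ref{mt} by applying the Thom--Mather first isotopy theorem to the projection $p\colon \mathscr{U}\to D$, $(t,\mathbf{z})\mapsto t$, with source equipped with the Whitney stratification $\mathscr{S}$ provided by Theorem~\ref{mt2}. Once $p$ is recognised as a proper stratified submersion over a small enough subdisc $D'\subset D$, the isotopy theorem delivers a stratum-preserving topological trivialization of the pair $(\mathscr{U},\mathscr{U}\cap V(f))$ over $D'$. Slice-wise restriction then yields, for every small $t$, a homeomorphism $\varphi_t\colon(U_t,\mathbf{0})\to(\varphi_t(U_t),\mathbf{0})$ sending $V(f_0)\cap U_t$ onto $V(f_t)\cap\varphi_t(U_t)$; the origin is preserved because the $t$-axis $C_\emptyset$ is itself a stratum of $\mathscr{S}$.

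Verifying that $p$ is a stratified submersion is the routine part. On the open strata $B_I$ and $C_I$ ($I\in\mathscr{I}_{nv}(f_0)$, respectively $I\in\mathscr{I}_v(f_0)$), and in particular on the $t$-axis $C_\emptyset$, the submersivity of $p$ is immediate since these are open in $\mathbb{C}\times\mathbb{C}^{*I}$. On $A_I=\mathscr{U}\cap V(f)\cap(\mathbb{C}\times\mathbb{C}^{*I})$, which is cut out from $\mathbb{C}\times\mathbb{C}^{*I}$ by the single equation $f=0$, submersivity of $p|_{A_I}$ is equivalent to the non-vanishing of the $\mathbf{z}$-gradient of $f$ at every point of $A_I$, and this is exactly the smoothness half of Proposition~\ref{lemmasmooth}.

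The main technical obstacle is that $p$ is not proper on $\mathscr{U}$, so one must confine the picture to a relatively compact neighbourhood of $(0,\mathbf{0})$. Using the uniform constant $\rho>0$ produced by the admissibility hypothesis, I would choose $0<\rho'<\rho$ and a small subdisc $D'\subset D$ and invoke the transversality assertion of Proposition~\ref{lemmasmooth} to ensure that for all $t\in D'$ and all $r\leq\rho'$, every stratum $A_I$ meets the sphere $\{t\}\times S_r$ transversely (the same transversality being automatic for the open strata $B_I$ and $C_I$). One then restricts everything to the compact set $D'\times\bar B_{\rho'}$ and refines $\mathscr{S}$ by adjoining the traces of its strata on the boundary $D'\times S_{\rho'}$; the transversality just established guarantees that the refined partition remains a Whitney stratification. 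The Thom--Mather first isotopy theorem now applies to the resulting proper stratified submersion and, via the controlled vector fields it produces, yields the trivializations described in the first paragraph. The corollary follows.
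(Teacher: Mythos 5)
Your proposal is correct and follows essentially the same route as the paper: Theorem~\ref{mt2} gives the Whitney stratification with the $t$-axis as a stratum, and the Thom--Mather first isotopy theorem then delivers topological local triviality. The paper simply cites Theorem~(5.2) and Corollary~(5.5) of \cite{GWPL} for this last step, whereas you explicitly unpack the verification of properness and stratified submersivity that those results encapsulate.
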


\begin{remark}
Topological equisingularity is also proved in \cite[Theorem (8.2)]{O6} under the simultaneous IND-condition.
\end{remark}

\begin{proof}[Proof of Corollary \ref{mt}]
By Theorem \ref{mt2}, $(\mathscr{U}\cap(\mathbb{C}\times \mathbb{C}^n),\mathscr{S})$ is a Whitney stratified set. Hence, by the Thom-Mather first isotopy theorem, it is topologically locally trivial (see, e.g., Theorem (5.2) and Corollary (5.5) of \cite{GWPL}). As the stratum $C_{\emptyset}$ is nothing but the $t$-axis, Corollary \ref{mt} follows.
\end{proof}

\nopagebreak[4]
\section{Proof of Theorem \ref{mt2}}\label{pmt2}

First of all, observe that if $I\subseteq J$, then $\mathbb{C}^{*I}$ is contained in the closure $\overline{\mathbb{C}^{*J}}$ of $\mathbb{C}^{*J}$. Moreover, if  $I\subseteq J$ and $J\in\mathscr{I}_v(f_0)$, then  $I\in\mathscr{I}_v(f_0)$ too. Therefore, to prove the theorem, it suffices to check that the Whitney $(b)$-regularity condition holds for all the pairs of strata satisfying one of the following three conditions:
\begin{enumerate}
\item 
$C_I \cap \overline{C_J}\not=\emptyset$ 
with $I\subseteq J$ and $I,J\in\mathscr{I}_{v}(f_0)$;
\item
$C_I \cap \overline{A_J}\not=\emptyset$ or $C_I \cap \overline{B_J}\not=\emptyset$
with $I\subseteq J$ and $I\in\mathscr{I}_v(f_0)$, $J\in\mathscr{I}_{nv}(f_0)$;
\item
$A_I \cap \overline{A_J}\not=\emptyset$,
$A_I \cap \overline{B_J}\not=\emptyset$ or
$B_I \cap \overline{B_J}\not=\emptyset$
with
$I\subseteq J$ and $I,J\in\mathscr{I}_{nv}(f_0)$.
\end{enumerate}
Except for the case of a pair of strata of the form $(A_J,C_I)$, the Whitney $(b)$-regularity condition immediately follows from Remark \ref{remarkstratification}. Thus, to prove our result, it suffices to show that for any $J\in\mathscr{I}_{nv}(f_0)$ and any $I\in\mathscr{I}_v(f_0)$, with $I\subseteq J$, $A_J:=\mathscr{U}\cap(V(f)\cap (\mathbb{C}\times\mathbb{C}^{*J}))$ is Whitney $(b)$-regular over $C_I:=\mathscr{U}\cap(\mathbb{C}\times\mathbb{C}^{*I})$ at any point $(\tau,\mathbf{q})=(\tau,q_1,\ldots,q_n)\in C_I \cap \overline{A_J}$ sufficiently close to the origin of $\mathbb{C}\times\mathbb{C}^n$. 
To simplify, without loss of generality, we may assume that $J=\{1,\ldots,n\}$ and $I=\{1,\ldots,m\}$ with $1\leq m\leq n-1$. (For $I=\emptyset$, see Remark \ref{rk-Ivide}.) In particular, $q_i\not=0$ if and only if $1\leq i\leq m$. 
By the curve selection lemma \cite{Milnor}, it is enough to show that the Whitney $(b)$-regularity condition holds along arbitrary real analytic paths 
\begin{align*}
& \gamma(s):=(t(s),\mathbf{z}(s)):=(t(s),z_1(s),\ldots,z_n(s))\\
& \tilde{\gamma}(s):=(\tilde t(s),\tilde{\mathbf{z}}(s)):=(\tilde t(s),\tilde z_1(s),\ldots,\tilde z_n(s))
\end{align*}
such that $\gamma(0)=\tilde{\gamma}(0)=(\tau,\mathbf{q})$, and $\tilde{\gamma}(s)\in C_I$ and $\gamma(s)\in A_J$ for $s\not=0$.
Consider the Taylor expansions (where, as above, the dots stand for the higher order terms):
\begin{align*}
& t(s)=\tau+b_0s+\cdots,\quad
z_i(s)=a_i s^{w_i}+b_i s^{w_i+1}+\cdots,\\
& \tilde t(s)=\tau+\tilde b_0 s+\cdots, \quad
\tilde z_i(s)=q_i +\tilde b_i s+\cdots,
\end{align*}
where $w_i=0$ and $a_i=q_i$ for $1\leq i\leq m$ while $w_i>0$ and $a_i\not=0$ for $i>m$. Note that, for any $s$, we have $\tilde z_i(s)=0$ if $i>m$.
Let
\begin{equation*}
\ell(s):=\overrightarrow{\tilde\gamma(s)\gamma(s)}
=(\ell_0(s),\ell_1(s),\ldots,\ell_n(s))
\end{equation*}
where
\begin{equation*}
\ell_i(s):=\left\{
\begin{aligned}
& (b_0-\tilde b_0)s+\cdots &&  \mbox{ for } i=0,\\
& (b_i-\tilde b_i)s+\cdots && \mbox{ for } 1\leq i\leq m,\\
& a_is^{w_i}+\cdots && \mbox{ for } m+1\leq i\leq n.
\end{aligned}
\right.
\end{equation*}
By reordering, we may suppose that
\begin{equation}\label{assumptionwi}
\begin{aligned}
w_{m+1}=\cdots=w_{m+m_1}<w_{m+m_1+1}=\cdots=w_{m+m_1+m_2}<\cdots\\
\cdots < w_{m+m_1+\cdots+m_{k-1}+1}=\cdots=w_{m+m_1+\cdots+m_{k}}=w_n,
\end{aligned}
\end{equation}
for some non-negative integers $m_1,\ldots,m_k\in\mathbb{N}$ with $m+m_1+\cdots+m_k=n$.
To show that the pair of strata $(A_J,C_I)$ satisfies the Whitney $(b)$-regularity condition at the point $(\tau,\mathbf{q})$, we have to prove that 
\begin{equation}\label{lfwbrc}
\lim_{s\to 0} \frac{\langle \ell(s),\mbox{grad} f(\gamma(s))\rangle}{\Vert \ell(s) \Vert\cdot \Vert \mbox{grad} f(\gamma(s)) \Vert}=0,
\end{equation}
where $\langle \cdot,\cdot\rangle$ is the standard Hermitian inner product on $\mathbb{C}\times \mathbb{C}^n$ and $\mbox{grad} f(\gamma(s))$ is the gradient vector of $f$ at $\gamma(s)$, that is,
\begin{equation*}
\mbox{grad} f(\gamma(s)):=\Biggl( \overline{\frac{\partial f}{\partial t}(\gamma(s))}, \overline{\frac{\partial f}{\partial z_1}(\gamma(s))},\ldots, \overline{\frac{\partial f}{\partial z_n}(\gamma(s))} \Biggr),
\end{equation*}
where the bar stands for the complex conjugation.
Let
\begin{equation*}
\mbox{ord}\, \ell(s):=\inf_{0\leq i\leq n}\, \mbox{ord}\, \ell_i(s)
\end{equation*}
where $\mbox{ord}\, \ell_i(s)$ is the order (in $s$) of the i-th component $\ell_i(s)$ of $\ell(s)$.  Clearly,
$\mbox{ord}\, \ell(s)\leq w_{m+1}$.
Note that if $\mbox{ord}\, \ell(s)<w_{m+1}$, then 
\begin{equation}\label{liml1}
\lim_{s\to 0}\frac{\ell(s)}{\vert s\vert^{\mbox{\tiny ord}\, \ell(s)}} = 
(*,\underbrace{*,\ldots,*}_{m \mbox{ \tiny terms}},\underbrace{0,\ldots,0}_{n-m \mbox{ \tiny zeros}}),
\end{equation}
where each term marked with a star ``$*$'' represents a complex number which may be zero or not.
On the other hand, if $\mbox{ord}\, \ell(s)=w_{m+1}$, then
\begin{equation}\label{liml2}
\lim_{s\to 0}\frac{\ell(s)}{\vert s\vert^{\mbox{\tiny ord}\, \ell(s)}} = (*,\underbrace{*,\ldots,*}_{m \mbox{ \tiny terms}},a_{m+1},\ldots,a_{m+m_1},\underbrace{0,\ldots,0}_{n-m -m_1\mbox{ \tiny zeros}}).
\end{equation}
Let $\mathbf{w}:=(w_1,\ldots,w_n)=(0,\ldots,0,w_{m+1},\ldots,w_n)$, and let $l_\mathbf{w}\colon \Gamma\to \mathbb{R}$ be the restriction of the linear map 
\begin{equation*}
(x_1,\ldots, x_n)\in \mathbb{R}^n\mapsto\sum_{1\leq i\leq n} x_i w_i\in \mathbb{R}, 
\end{equation*}
where $\Gamma$ is the non-compact Newton boundary $\Gamma_{nc}(f_t;\mathbf{z})$, which is independent of $t$. Denote by $d_\mathbf{w}$ the minimal value of $l_\mathbf{w}$, and write $\Delta_\mathbf{w}$ for the face of $\Gamma$ defined by the locus where $l_\mathbf{w}$ takes this minimal value. Clearly, $\Delta_\mathbf{w}$ is an essential non-compact face, and $I_{\Delta_\mathbf{w}}=I$.
Finally, let $\mathbf{a}:=(a_1,\ldots,a_n)$. 
As $\Gamma$ does not depend on $t$, 
\begin{align}\label{f1}
\frac{\partial f}{\partial z_i} (\gamma(s)) = \frac{\partial {(f_{\tau})}_{\Delta_{\mathbf{w}}}}{\partial z_i} (\mathbf{a})\, s^{d_\mathbf{w}-w_i}+\cdots 
\end{align}
for any $1\leq i\leq n$, while
\begin{align}\label{ldt0}
\lim_{s\to 0}\biggl(\frac{1}{\vert s\vert^{d_{\mathbf{w}}-1}}\cdot\frac{\partial f}{\partial t} (\gamma(s))\biggr) = 0.
\end{align}
(As usual, ${(f_{\tau})}_{\Delta_{\mathbf{w}}}$ is the face function associated with $f_{\tau}$ and $\Delta_{\mathbf{w}}$.)
Also, note that, by (\ref{assumptionwi}), 
\begin{equation}\label{ineg2}
\begin{aligned}
d_\mathbf{w}-w_{m+1}=\cdots=d_\mathbf{w}-w_{m+m_1}>d_\mathbf{w}-w_{m+m_1+1}=\cdots\\
\cdots=d_\mathbf{w}-w_{m+m_1+m_2}>\cdots\ d_\mathbf{w}-w_n.
\end{aligned}
\end{equation}
Let $o(s):=\mbox{ord}(\overline{\mbox{grad} f(\gamma(s))})$, that is,
\begin{equation*}
o(s):=\inf\biggl\{ \mbox{ord}\biggl(\frac{\partial f}{\partial t} (\gamma(s))\biggr), \inf_{1\leq i\leq n} \mbox{ord} \biggl(\frac{\partial f}{\partial z_i} (\gamma(s))\biggr) \biggr\}.
\end{equation*}
By the uniform local tameness (i.e., the condition $r_{nc}(f_t)\geq \rho$ for all small $t$), if $(\tau,\mathbf{q})$ is close enough to $(0,\mathbf{0})\in\mathbb{C}\times\mathbb{C}^n$, then there exists an integer $i_0\in\{m+1,\ldots,n\}$ such that
\begin{equation}\label{uult}
\frac{\partial (f_\tau)_{\Delta_{\mathbf{w}}}}
{\partial z_{i_0}}(\mathbf{a}) \not= 0.
\end{equation}
Combined with (\ref{f1}), the relation (\ref{uult}) shows that 
\begin{equation*}
o(s)\leq d_{\mathbf{w}}-w_{i_0}\overset{\tiny \mbox{(\ref{ineg2})}}\leq d_{\mathbf{w}}-w_{m+1}\leq d_{\mathbf{w}}-1. 
\end{equation*}
Then, by (\ref{ldt0}),
\begin{align*}
\lim_{s\to 0}\biggl(\frac{1}{\vert s\vert^{o(s)}}\cdot\frac{\partial f}{\partial t} (\gamma(s))\biggr) = 0,
\end{align*}
and since $w_i=0$ for $1\leq i\leq m$,
\begin{equation}\label{mr}
\lim_{s\to 0}\frac{\mbox{grad} f(\gamma(s))}{\vert s\vert^{o(s)}} = 
\left\{
\begin{aligned}
& (0,\underbrace{0,\ldots,0}_{m+m_1 \mbox{ \tiny zeros}},\underbrace{*,\ldots,*}_{\ n-m-m_1 \mbox{ \tiny terms}}) \quad \mbox{if}\quad  o(s)<d_{\mathbf{w}}-w_{m+1},\\
& \biggl(0,\underbrace{0,\ldots,0}_{m\mbox{ \tiny zeros}},\overline{\frac{\partial {(f_\tau)}_{\Delta_{\mathbf{w}}}}{\partial z_{m+1}} (\mathbf{a})},\ldots, \overline{\frac{\partial {(f_\tau)}_{\Delta_{\mathbf{w}}}}{\partial z_{m+m_1}} (\mathbf{a})},\underbrace{*,\ldots,*}_{n-m-m_1 \mbox{ \tiny terms}}\biggr)\\ 
& \mbox{\hskip 4.4cm if}\quad o(s)=d_{\mathbf{w}}-w_{m+1}.
\end{aligned}
\right.
\end{equation}
Since $\Vert \ell(s) \Vert \sim c_1 \vert s \vert^{\mbox{\tiny ord}\, \ell(s)}$ and $\Vert \mbox{grad} f(\gamma(s)) \Vert \sim c_2 \vert s \vert^{o(s)}$ as $s\to 0$ ($c_1$, $c_2$ constants), it follows immediately from (\ref{liml1}), (\ref{liml2}) and (\ref{mr}) that the relation (\ref{lfwbrc}) is satisfied if $o(s)<d_{\mathbf{w}}-w_{m+1}$ or if $o(s)=d_{\mathbf{w}}-w_{m+1}$ and $\mbox{ord}\, \ell(s)<w_{m+1}$. In order to show that (\ref{lfwbrc}) also holds when $o(s)=d_{\mathbf{w}}-w_{m+1}$ and $\mbox{ord}\, \ell(s)=w_{m+1}$, we must prove that
\begin{equation}\label{whp}
\sum_{i=m+1}^{m+m_1} a_i \frac{\partial {(f_\tau)}_{\Delta_{\mathbf{w}}}}{\partial z_{i}} (\mathbf{a})=0.
\end{equation}
To prove (\ref{whp}), we proceed as follows. 
The polynomial $(f_\tau)_{\Delta_{\mathbf{w}}}$ is weighted homogeneous with respect to the weights $\mathbf{w}$ and has weighted degree $d_{\mathbf{w}}$. Then, by the Euler identity, for any $\mathbf{z}=(z_1,\ldots,z_n)$ we have:
\begin{equation}\label{euler}
\sum_{1\leq i\leq n} w_i \, z_i \, \frac{\partial {(f_\tau)}_{\Delta_{\mathbf{w}}}}{\partial z_i}(\mathbf{z})=d_{\mathbf{w}} \cdot {(f_\tau)}_{\Delta_{\mathbf{w}}}(\mathbf{z}).
\end{equation}
As $f(\gamma(s))=0$ for any $s$, we have $({f_\tau})_{\Delta_{\mathbf{w}}}(\mathbf{a})=0$. Therefore, by (\ref{euler}), 
\begin{equation}\label{adlt2}
\sum_{1\leq i\leq n} w_i \, a_i \frac{\partial {(f_\tau)}_{\Delta_{\mathbf{w}}}}{\partial z_i}(\mathbf{a})=0.
\end{equation}
Combined with (\ref{f1}) and (\ref{ineg2}), the equality $o(s)=d_{\mathbf{w}}-w_{m+1}$ implies that for any $i>m+m_1$,
\begin{equation}\label{adlt}
\frac{\partial {(f_\tau)}_{\Delta_{\mathbf{w}}}}{\partial z_i} (\mathbf{a})=0.
\end{equation}
Indeed, if there were $i_1> m+m_1$ such that (\ref{adlt}) does not hold, then, by (\ref{f1}),
\begin{equation*}
o(s)\leq d_{\mathbf{w}}-w_{i_1}\overset{\tiny \mbox{(\ref{ineg2})}}<d_{\mathbf{w}}-w_{m+1},
\end{equation*}
which is a contradiction.
As $w_i=0$ for $1\leq i\leq m$, it follows from (\ref{adlt2}) and (\ref{adlt}) that
\begin{equation*}
\sum_{i=m+1}^{m+m_1} w_i \, a_i \frac{\partial {(f_\tau)}_{\Delta_{\mathbf{w}}}}{\partial z_i}(\mathbf{a}) = w_{m+1} \sum_{i=m+1}^{m+m_1} a_i \frac{\partial {(f_\tau)}_{\Delta_{\mathbf{w}}}}{\partial z_i}(\mathbf{a})=0.
\end{equation*}
As $w_{m+1}>0$, the equality (\ref{whp}) follows. This completes the proof.

\begin{remark}\label{rk-Ivide}
In the above proof, we have assumed $I\not=\emptyset$ (i.e., $m\geq 1$). However, a straightforward modification shows that the argument still works when $I=\emptyset$. We just observe that, in this case, the face $\Delta_{\mathbf{w}}$ is compact, and hence, in (\ref{uult}), instead of the uniform local tameness, it suffices to invoke the non-degeneracy condition. 
\end{remark}

\begin{remark}\label{rk-rbt}
Note that in the special case where the functions $f_t$ have an \emph{isolated} singularity at the origin, we recover the Brian\c con assertion mentioned in the introduction (cf.~Assertion \ref{thm-briancon}). Indeed, in this case, as observed by Brian\c con in \cite{B}, by adding monomials of the form $z_i^{N_i}$ for large values of $N_i$, we may assume that $f_t$ is \emph{convenient}, that is, the intersection of the Newton boundary $\Gamma(f_t;\mathbf{z})$ with each coordinates subspace is non-empty. (In other words, the family of hypersurfaces $\{V(\tilde f_t)\}$ defined by the polynomial functions $\tilde f_t(\mathbf{z}):=f_t(\mathbf{z})+c_1z_1^{N_1}+\cdots+c_nz_n^{N_n}$ is Whitney equisingular if and only if the original family $\{V(f_t)\}$ is.) But for convenient polynomials, the only vanishing coordinates subspace is $\mathbb{C}^\emptyset=\{\mathbf{0}\}$, and by Remark \ref{rk-Ivide}, the partition 
\begin{equation*}
\{A_I\, ;\, I\in\mathscr{I}_{nv}(f_0)\}\cup \{C_\emptyset\}
\end{equation*}
of $\mathscr{U}\cap V(f)$ is a Whitney stratification with the $t$-axis as a stratum if $\Gamma(f_t;\mathbf{z})$ is independent of $t$ and $f_t$ is non-degenerate.
\end{remark}

\section{Examples of admissible families}

In this section, we give some examples of admissible (and therefore Whitney equisingular) families with non-isolated singularities.

\subsection{A family of curves with non-isolated singularities} Consider the family given by the polynomial function 
\begin{equation*}
f(t,z_1,z_2):=z_1^2z_2^3+z_1^3z_2^2+tz_1^2z_2^4.
\end{equation*}
Let $A=(2,3)$ and $B=(3,2)$. For $t$ small enough, the singular locus of $f_t$ in a~suf\-ficiently small open neighbourhood of the origin in $\mathbb{C}^2$ consists of the coordinates axes.
The non-compact Newton boundary $\Gamma_{nc}(f_t;\mathbf{z})$, which is clearly independent of $t$, has one compact $1$-dimensional face (namely, the face $\overline{AB}$), two $0$-dimensional faces ($A$ and $B$), and two essential non-compact faces: $\Xi_1:=B+\mathbb{R}_+\mathbf{e}_1$ and $\Xi_2:=A+\mathbb{R}_+\mathbf{e}_2$.  We easily check that for each compact face $\Delta$ and each $t$, the face function $(f_t)_{\Delta}$ has no critical point on $(\mathbb{C}^*)^2$ (i.e., $f_t$ is non-degenerate).
We claim that for any $I\in\mathscr{I}_v(f_0)=\mathscr{I}_v(f_t)$, the family $\{f_t\}$ is uniformly locally tame along $\mathbb{C}^I$. Indeed, a trivial calculation shows that for any fixed $u_1\in\mathbb{C}^*$, the function 
\begin{equation*}
z_2\mapsto(f_t)_{\Xi_1}(u_1,z_2):=u_1^3z_2^2
\end{equation*}
of the variable $z_2$ has no critical point on $\mathbb{C}^{*\{1,2\}}_{u_1}$.
Similarly, for any fixed $u_2\in\mathbb{C}^*$ with $\vert u_2\vert<1/\vert t\vert$ (if $t\not=0$), the function 
\begin{equation*}
z_1\mapsto(f_t)_{\Xi_2}(z_1,u_2):=z_1^2u_2^3+tz_1^2u_2^4
\end{equation*}
of the variable $z_1$ has no critical point on $\mathbb{C}^{*\{1,2\}}_{u_2}$. So we can take
\begin{equation*}
r_{nc}(f_t)=\left\{
\begin{aligned}
1/\vert t\vert\mbox{ for } t\not=0,\\
\infty \mbox{ for } t=0,
\end{aligned}
\right.
\end{equation*}
and we have $r_{nc}(f_t)>\rho:=1$ for all $t$ with $\vert t\vert<1$. It follows that
the family $\{f_t\}$ is admissible. 

\subsection{Families with ``big'' exponents for $t$-dependent monomials}
Let $h(\mathbf{z})$ be a polynomial function on $\mathbb{C}^n$ and let $g(t,\mathbf{z})$ be a polynomial function on $\mathbb{C}\times \mathbb{C}^n$. As usual, we write $g_t(\mathbf{z}):=g(t,\mathbf{z})$. Suppose that for all small $t$, $\Gamma_+(g_t;\mathbf{z})\subseteq\Gamma_+(h;\mathbf{z})$ and $\Gamma_{nc}(g_t;\mathbf{z})\cap\Gamma_{nc}(h;\mathbf{z})=\emptyset$. Under this assumption, if $h$ is non-degenerate and locally tame along the vanishing coordinates subspaces, then the family $\{f_t\}$ defined by $f_t(\mathbf{z}):=h(\mathbf{z})+g_t(\mathbf{z})$ is admissible. For example, the family given by $f(t,z_1,z_2):=z_1^2z_2^3+z_1^3z_2^2+tz_1^3z_2^3$ is admissible.

\subsection{Admissible families and branched coverings} Take a positive integer $p\in \mathbb{N}^*$, and consider the branched covering 
\begin{equation*}
\varphi^{p}\colon \mathbb{C}^n\to\mathbb{C}^n,\,
(z_1,\ldots, z_n)\mapsto (z_1^{p},\ldots, z_n^{p}),
\end{equation*}
whose ramification locus is given by the coordinates hyperplanes $z_i=0$ ($1\leq i\leq n$). In \cite[Proposition 22]{O1}, the second author showed that if $h(\mathbf{z})$ is a non-degenerate polynomial function which is locally tame along $\mathbb{C}^I$ for any $I\in\mathscr{I}_v(h)$, then $h^{p}(\mathbf{z}):=h\circ \varphi^{p}(\mathbf{z})=h(z_1^{p},\ldots, z_n^{p})$ is non-degenerate, $\mathscr{I}_v(h^{p})=\mathscr{I}_v(h)$, and $h^{p}$ is locally tame along $\mathbb{C}^I$ for any $I\in\mathscr{I}_v(h^{p})$. Actually, the proof shows that if $\{f_t\}$ is an admissible family of polynomial functions, then so is the family $\{f_t^{p}\}$, where $f_t^{p}(\mathbf{z}):=f_t\circ \varphi^{p}(\mathbf{z})$. Indeed, it is not difficult to see that the independence of $\Gamma_{nc}(f_t;\mathbf{z})$ with respect to $t$ implies that of $\Gamma_{nc}(f_t^{p};\mathbf{z})$. Also, it is easy to check that $\mathscr{I}_v(f_t^{p})=\mathscr{I}_v(f_t)$ and $\mathscr{I}_{nv}(f_t^{p})=\mathscr{I}_{nv}(f_t)$. To see that $f_t^{p}$ is non-degenerate, we argue by contradiction. Take any compact face $\Xi\subseteq \Gamma(f_t^{p};\mathbf{z})$, and suppose there exists $\mathbf{a}:=(a_1,\ldots,a_n)\in(\mathbb{C}^*)^n$ such that  
\begin{equation}\label{degface}
\frac{\partial (f_t^{p})_\Xi}{\partial z_i}(\mathbf{a})=0
\end{equation}
for all $1\leq i\leq n$. Clearly, $(f_t^{p})_\Xi(\mathbf{z})=(f_t)_\Delta\circ\varphi^{p}(\mathbf{z})=(f_t)_\Delta(z_1^{p},\ldots,z_n^{p})$, where $\Delta$ is the compact face of $\Gamma(f_t;\mathbf{z})$ corresponding to $\Xi$---that is, if $\Delta\cap\mathbb{N}^2=\{(\alpha_1,\alpha_2),(\beta_1,\beta_2),\ldots\}$, then $\Xi\cap\mathbb{N}^2=\{(\alpha_1^p,\alpha_2^p),(\beta_1^p,\beta_2^p),\ldots\}$ (cf.~Figure \ref{figure2}). Therefore, by (\ref{degface}),
\begin{equation*}
\frac{\partial (f_t)_\Delta}{\partial z_i}(a_1^{p},\ldots,a_n^{p})\cdot p a_i^{p-1}=0,
\end{equation*}
and hence,
\begin{equation*}
\frac{\partial (f_t)_\Delta}{\partial z_i}(a_1^{p},\ldots,a_n^{p})=0
\end{equation*}
for all $1\leq i\leq n$. This contradicts the non-degeneracy of $f_t$. 

\begin{figure}[t]
\includegraphics[width=16cm,height=5cm]{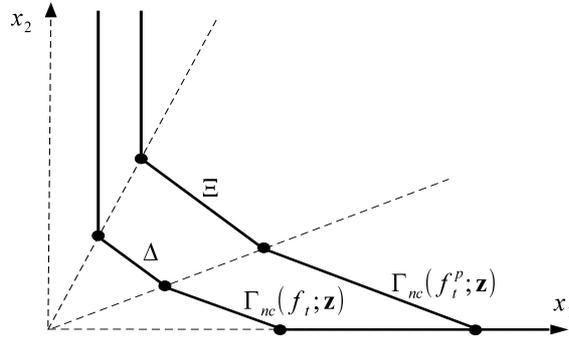}
\caption{A face $\Delta\subseteq\Gamma_{nc}(f_t;\mathbf{z})$ and its corresponding face $\Xi\subseteq\Gamma_{nc}(f_t^p;\mathbf{z})$}
\label{figure2}
\end{figure}

\begin{claim} 
The family $\{f_t^{p}\}$ is uniformly locally tame along the vanishing coordinates subspaces.
\end{claim}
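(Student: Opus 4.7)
The plan is to exploit the natural bijection between essential non-compact faces of $\Gamma_{nc}(f_t;\mathbf{z})$ and those of $\Gamma_{nc}(f_t^{p};\mathbf{z})$ induced by the branched covering $\varphi^{p}$, transfer the local tameness of $(f_t)_{\Delta}$ to $(f_t^{p})_{\Xi}$ through the chain rule, and then fix a radius independent of $t$.

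For the face correspondence, fix a weight $\mathbf{w}\in\mathbb{N}^n\setminus\{\mathbf{0}\}$. The face $\Delta_\mathbf{w}\subseteq\Gamma_{+}(f_t;\mathbf{z})$ and the face $\Xi_\mathbf{w}\subseteq\Gamma_{+}(f_t^{p};\mathbf{z})$ determined by the same weight satisfy
\begin{equation*}
(f_t^{p})_{\Xi_\mathbf{w}}(\mathbf{z})=(f_t)_{\Delta_\mathbf{w}}(z_1^{p},\ldots,z_n^{p})\quad\text{and}\quad I_{\Xi_\mathbf{w}}=I_{\Delta_\mathbf{w}}=I_\mathbf{w}
\end{equation*}
by direct substitution. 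Since $f_t^{p}$ vanishes identically on $\mathbb{C}^{I_\mathbf{w}}$ if and only if $f_t$ does, conditions (i) and (ii) of Definition \ref{def-mnbencf} are equivalent for $\Xi_\mathbf{w}$ and $\Delta_\mathbf{w}$, so essentialness transfers under the correspondence.

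Next, with $I_\mathbf{w}=\{i_1,\ldots,i_m\}$, I would apply the chain rule:
\begin{equation*}
\frac{\partial (f_t^{p})_{\Xi_\mathbf{w}}}{\partial z_k}(\mathbf{z})=p\,z_k^{p-1}\,\frac{\partial (f_t)_{\Delta_\mathbf{w}}}{\partial z_k}(z_1^{p},\ldots,z_n^{p}).
\end{equation*}
Since every $z_k$ is non-zero on $\mathbb{C}^{*\{1,\ldots,n\}}_{u_{i_1},\ldots,u_{i_m}}$, the point $\mathbf{z}$ is critical for $(f_t^{p})_{\Xi_\mathbf{w}}$, as a function of the $n-m$ free variables, if and only if $(z_1^{p},\ldots,z_n^{p})\in\mathbb{C}^{*\{1,\ldots,n\}}_{u_{i_1}^{p},\ldots,u_{i_m}^{p}}$ is critical for $(f_t)_{\Delta_\mathbf{w}}$.

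Finally, I would set $\rho':=\min(1,\rho)>0$, where $\rho$ is the uniform lower bound on $r_{nc}(f_t)$ provided by the admissibility of $\{f_t\}$. For $u_{i_1},\ldots,u_{i_m}\in\mathbb{C}^{*}$ with $|u_{i_1}|^2+\cdots+|u_{i_m}|^2<\rho'^{2}\leq 1$, each $|u_{i_j}|<1$, so $|u_{i_j}^{p}|^{2}=|u_{i_j}|^{2p}\leq|u_{i_j}|^{2}$, and summing gives
\begin{equation*}
|u_{i_1}^{p}|^{2}+\cdots+|u_{i_m}^{p}|^{2}\leq|u_{i_1}|^{2}+\cdots+|u_{i_m}|^{2}<\rho'^{2}\leq\rho^{2}\leq r((f_t)_{\Delta_\mathbf{w}})^{2}.
\end{equation*}
Local tameness of $(f_t)_{\Delta_\mathbf{w}}$ then excludes critical points on $\mathbb{C}^{*\{1,\ldots,n\}}_{u_{i_1}^{p},\ldots,u_{i_m}^{p}}$, and the chain-rule equivalence propagates this to $(f_t^{p})_{\Xi_\mathbf{w}}$. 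Thus $r((f_t^{p})_{\Xi_\mathbf{w}})\geq\rho'$; taking the infimum over essential non-compact faces yields $r_{nc}(f_t^{p})\geq\rho'>0$ uniformly in small $t$. The only point requiring care is the necessity of choosing $\rho'\leq 1$, which is precisely what makes the elementary inequality $|u|^{2p}\leq|u|^{2}$ usable to absorb the exponent change induced by $\varphi^{p}$; otherwise the argument is essentially mechanical and no serious obstacle arises.
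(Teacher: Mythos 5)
Your proof is correct and follows essentially the same route as the paper's: identify corresponding essential non-compact faces of $\Gamma_{nc}(f_t;\mathbf{z})$ and $\Gamma_{nc}(f_t^{p};\mathbf{z})$ via the same weight vector, transfer criticality through the chain rule (using that all coordinates are nonzero on the relevant tori), and absorb the $p$-th power by taking the tameness radius $\leq 1$, precisely the paper's ``without loss of generality $\rho<1$'' step.
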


\begin{proof}
By hypothesis, we know that the family $\{f_t\}$ is uniformly locally tame along the vanishing coordinates subspaces---that is, $r_{nc}(f_t)\geq \rho$ for all small $t$. Without loss of generality, we may assume that $\rho<1$. Consider a subset $I\in\mathscr{I}_v(f_t^{p})$. For simplicity, let us assume that $I=\{1,\ldots,m\}$. Let $u_1,\ldots,u_m$ be non-zero complex numbers such that 
\begin{equation*}
\vert u_1 \vert^2 + \cdots + \vert u_m \vert^2\leq \rho^2.
\end{equation*}
Take any essential non-compact face $\Xi\in\Gamma_{nc}(f_t^{p};\mathbf{z})$ such that $I_\Xi=I$, and consider the corresponding face $\Delta\in\Gamma_{nc}(f_t;\mathbf{z})$. (Note that $I_\Delta=I$ too.) We want to show that the face function $(f_t^{p})_\Xi$ has no critical point on $\mathbb{C}^{*\{1,\ldots,n\}}_{u_1,\ldots,u_m}$ as a function of the variables $z_{m+1},\ldots, z_n$. Again we argue by contradiction. Suppose $(u_1,\ldots,u_m,a_{m+1},\ldots,a_n)$ is a critical point.
Then, for $m+1\leq i\leq n$,
\begin{align*}
0 & = \frac{\partial (f_t^{p})_\Xi}{\partial z_i}(u_1,\ldots,u_m,a_{m+1},\ldots,a_n) \\
& = \frac{\partial (f_t)_\Delta}{\partial z_i}(u_1^{p},\ldots,u_m^{p},a_{m+1}^{p},\ldots,a_n^{p})\cdot p a_i^{p-1},
\end{align*}
and therefore,
\begin{align}\label{getcontradiction}
\frac{\partial (f_t)_\Delta}{\partial z_i}(u_1^{p},\ldots,u_m^{p},a_{m+1}^{p},\ldots,a_n^{p})=0.
\end{align}
As $\rho<1$,
\begin{equation*}
\vert u_1^p \vert^2 + \cdots + \vert u_m^p \vert^2\leq \rho^2,
\end{equation*}
and hence (\ref{getcontradiction}) contradicts the uniform local tameness of the family $\{f_t\}$. 
\end{proof}

Altogether, the family $\{f_t^{p}\}$ is admissible.

\bibliographystyle{amsplain}

\begin{thebibliography}{10}

\bibitem {B} J. Brian\c con, ``Le th\'eor\`eme de Kouchnirenko,'' unpublished lecture note.

\bibitem {GWPL} C. Gibson, K. Wirthm\"uller, A. du Plessis, E. Looijenga, ``Topological stability of smooth mappings,'' Lecture Notes in Mathematics \textbf{552}, Springer-Verlag, Berlin-New York, 1976.

\bibitem {K} A. G. Kouchnirenko, ``Poly\`edres de Newton et nombres de Milnor,'' Invent. Math. \textbf{32} (1976), no.~1, 1--31. 

\bibitem {M} J. Mather,  ``Notes on topological stability,'' Bull. Amer. Math. Soc. (N. S.) \textbf{49} (2012), no. 4, 475--506.

\bibitem {Milnor} J. Milnor, ``Singular points of complex hypersurfaces,'' Annals of Math. Studies \textbf{61}, Princeton Univ. Press, Princeton, N. J., Univ. Tokyo Press, Tokyo, 1968.

\bibitem {O2} M. Oka, ``On the bifurcation of the multiplicity and topology of the Newton boundary,'' J. Math. Soc. Japan \textbf{31} (1979), no.~3, 435--450.

\bibitem {O5} M. Oka, ``On the topology of the Newton boundary III,'' J. Math. Soc. Japan \textbf{34} (1982), no.~3, 541--549.

\bibitem {O6} M. Oka, ``Canonical stratification of non-degenerate complete intersection varieties,'' J. Math. Soc. Japan \textbf{42} (1990), no.~3, 397--422.

\bibitem {O4} M. Oka, ``Non-degenerate complete intersection singularity,'' 
Actualit\'es Math\'ematiques, Hermann, Paris, 1997.

\bibitem {O3} M. Oka, ``Non-degenerate mixed functions,'' Kodai Math. J. \textbf{33} (2010), no. 1, 1--62.

\bibitem {O1} M. Oka, ``On Milnor fibrations of mixed functions, $a_f$-condition and boundary stability,'' Kodai Math. J. \textbf{38} (2015) 581--603.

\bibitem {T} R. Thom, ``Ensembles et morphismes stratifi\'es,'' Bull. Amer. Math. Soc. \textbf{75} (1969) 240--284.

\bibitem {Z} O. Zariski, ``Some open questions in the theory of singularities,'' Bull. Amer. Math. Soc. \textbf{77} (1971) 481--491.

\end{thebibliography}

\end{document}